\documentclass[final,12pt]{colt2026} 

\usepackage{algorithm}
\usepackage{algorithmic}
\usepackage{enumerate}
\usepackage{indentfirst}
\usepackage{xcolor}
\usepackage{makecell}

\DeclareMathOperator{\E}{\mathbb{E}}
\DeclareMathOperator{\Prob}{\mathbb{P}}
\DeclareMathOperator{\argmin}{argmin}
\def\beh#1{{\color{black}#1}}

\newcommand{\Nc}{\mathcal{N}}
\newcommand{\Oc}{\mathcal{O}}
\newcommand{\R}{\mathbb{R}}
\newcommand{\Z}{\mathbb{Z}^+}
\newcommand{\eps}{\varepsilon}

\newcommand{\Sp}{\mathbb{S}}
\newcommand{\Ss}{\mathcal{S}}
\newcommand{\proj}{\Pi_\mathcal{S}}

\newcommand{\x}{\boldsymbol{x}}
\newcommand{\y}{\boldsymbol{y}}
\newcommand{\z}{\boldsymbol{z}}
\newcommand{\bxi}{\boldsymbol{\xi}}
\newcommand{\bu}{\boldsymbol{u}}
\newcommand{\bv}{\boldsymbol{v}}
\newcommand{\step}{\eta}
\newcommand{\tauit}{\tau_i(t)}
\newcommand{\grad}{\boldsymbol{g}}
\newcommand{\pfi}{\Tilde{\grad}_i}

\newcommand{\pg}{\boldsymbol{h}}
\newcommand{\Ps}{P_\mathcal{S}}

\newtheorem{assumption}{Assumption}

\title[Convex and Non-convex Federated Learning with Stale Stochastic Gradients]{Convex and Non-convex Federated Learning with Stale Stochastic Gradients: Diminishing Step Size is All You Need}
\usepackage{times}

\coltauthor{%
 \Name{Xinran Zheng} \Email{xinran22@illinois.edu}\\
 \addr University of Illinois Urbana-Champaign
 \AND
 \Name{Tara Javidi} \Email{tjavidi@ucsd.edu}\\
 \addr University of California San Diego
 \AND
 \Name{Behrouz Touri} \Email{touri1@illinois.edu}\\
 \addr University of Illinois Urbana-Champaign%
}

\begin{document}

\maketitle

\begin{abstract}%
    We propose a general framework for distributed stochastic optimization under delayed gradient models. In this setting, $n$ local agents leverage their own data and computation to assist a central server in minimizing a global objective composed of agents' local cost functions. Each agent is allowed to transmit stochastic—potentially biased and delayed—estimates of its local gradient. While a prior work has advocated delay-adaptive step sizes for stochastic gradient descent (SGD) in the presence of delays, we demonstrate that a pre-chosen diminishing step size is sufficient and matches the performance of the adaptive scheme. Moreover, our analysis establishes that diminishing step sizes recover the optimal SGD rates for nonconvex and strongly convex objectives.
\end{abstract}

\begin{keywords}%
    Federated learning, delay, distributed optimization, projected stochastic gradient descent
\end{keywords}

\section{Introduction}

Distributed optimization and federated learning have become a core primitive in modern learning and decision systems, where data and computation are inherently decentralized across many devices, organizations, or geographic locations. 
In a typical server–client architecture, a central coordinator seeks to minimize a global objective formed by aggregating local cost functions, while $n$ agents contribute updates computed from their own data. This paradigm enables scalability and privacy-preserving training, but also introduces new algorithmic and theoretical challenges because communication~\cite{alistarh2017qsgd}, computation~\cite{recht2011hogwild}, and data access~\cite{mcmahan2017communication,fang2022communication} are all imperfect.

This paper focuses on gradient-descent–based methods due to their simplicity and broad applicability, which has been extensively studied in \cite{nedic2009distributed,tsianos2012consensus,nedic2014distributed,tatarenko2017non}.
In practical distributed deployments, however, two phenomena are hard to ignore.

First, the gradient information transmitted by agents is typically stochastic—arising from data sampling, communication compression/quantization, and local inexact computations—and most existing analyses adopt the simplifying assumption that these stochastic gradients are unbiased \cite{alistarh2017qsgd,sra2016adadelay,ghadimi2016mini}. Nevertheless, biased gradient surrogates naturally appear in practice; a prominent example is zeroth-order optimization, where agents can only query function values and must construct gradient estimates via random perturbations, which are generally biased \cite{tang2023zeroth,zheng2023zeroth,fang2022communication,malladi2023fine}.

Second, agent updates are frequently delayed because of stragglers, intermittent connectivity, or asynchronous communication~\cite{reisizadeh2022straggler,yang2022federated}; as a result, the server may apply stale gradient information computed at earlier iterates. In prior work, such delays are typically assumed to be a random variable that is uniformly bounded by a constant~\cite{tang2023zeroth,zheng2023zeroth,zheng2024general}, or has i.i.d.\ exponential distribution across time~\cite{reisizadeh2022straggler}. A weaker assumption of scaled delay is addressed by an algorithm employing delay-adaptive step-sizes in~\cite{sra2016adadelay}.
When constraints are present, projected variants of SGD further couple these effects with feasibility considerations, making it essential to understand how bias and delay jointly impact convergence guarantees.

In this work we study constrained stochastic gradient descent under a delayed, approximate gradient model with a mild delay condition. Specifically, agents may transmit stochastic—possibly biased—estimates of their local gradients, and the server updates using gradient information that is scaled-delayed, meaning that the gradient used at time $t$ is computed no earlier than time $\kappa t$ for some $\kappa\in(0,1)$, with the delay also having bounded second moment. To the best of our knowledge, this is the first work to analyze constrained SGD with biased stochastic gradient estimators under this scaled delay model. A key message of our results is that one does not need delay-adaptive schemes: a standard pre-chosen diminishing step size already suffices to achieve optimal performance (up to logarithmic factors) under the same delay assumptions considered in previous delay-adaptive analyses~\cite{sra2016adadelay}.

\begin{table}[t]
\centering
\caption{Assumption comparison with related works.}
\label{tab:compare_assumption}
\renewcommand{\arraystretch}{1.2}
\setlength{\tabcolsep}{6pt}
\small

\begin{tabular}{|c|c|c|c|}
\hline
Work & \textbf{Domain of $\x$} & \textbf{Stochastic gradient} & \textbf{Delay} \\
\hline
\textbf{Ours} & constrained & biased & scaled  \\
\hline
\scriptsize{\cite{ghadimi2016mini}} & constrained & unbiased & 0  \\
\hline
\scriptsize{\cite{sra2016adadelay}} & constrained & unbiased & scaled \\
\hline
\scriptsize{\cite{zheng2023zeroth}} & unconstrained & biased & bounded \\
\hline
\scriptsize{\cite{zheng2024general}} & unconstrained & biased & bounded \\
\hline
\end{tabular}
\end{table}

\begin{table}[t]
\centering
\caption{Performance comparison with related works.}
\label{tab:compare_performance}
\renewcommand{\arraystretch}{1.2}
\setlength{\tabcolsep}{6pt}
\small

\begin{tabular}{|c|c|c|c|}
\hline
Work & \makecell{\textbf{Non-convex}\\ $\frac{1}{T+1}\!\sum_{t=0}^{T}{\E[\|\nabla f(\x(t))\|^2]}$} & \makecell{\textbf{Strongly convex}\\ $\E[\|\x(T)-\x^*\|^2]$} & \makecell{\textbf{Convex}\\ $\E[f(\Tilde{\x}(T))]-f^*$} \\
\hline
\textbf{Ours} &  $\Oc(1)$ for projected gradient & $\Oc(\frac{1}{T})$ & $\Oc(\frac{\log T}{\sqrt{T}})$ \\
\hline
\scriptsize{\cite{ghadimi2016mini}} & $\Oc(1)$ for projected gradient &  & $\Oc(\frac{\log T}{\sqrt{T}})$ \\
\hline
\scriptsize{\cite{sra2016adadelay}} &   &  & $\Oc(\frac{1}{\sqrt{T}})$ \\
\hline
\scriptsize{\cite{zheng2023zeroth}} &  $\Oc(\frac{\log T}{\sqrt{T}})$ &  &  \\
\hline
\scriptsize{\cite{zheng2024general}} &   & $\Oc(\frac{1}{T})$ &  \\
\hline
\end{tabular}
\end{table}

\paragraph{Contributions}

We study a general framework for distributed optimization with delayed, approximate stochastic gradients under a mild delay condition. Specifically, we assume that the delay is scaled (i.e., the gradient information used at time $t$ is computed not earlier than time $\kappa t$) and has bounded second moment. We show that for \textbf{non-convex} functions, the convergence performance for $\frac{1}{T+1}\!\sum_{t=0}^{T}{\E[\|\pg(t)\|^2]}$, where $\pg(t)$ denotes the projected gradient, matches that of classical unbiased SGD without delay. For \textbf{strongly convex} functions, the mean squared error achieves an $\Oc(\frac{1}{T})$ rate, again matching the best-known rates for classical SGD without delay. Furthermore, for general \textbf{convex} functions, we obtain an error bound of $\Oc\left(\frac{\log T}{\sqrt{T}}\right)=\Oc(\frac{1}{T^{\frac{1}{2}-\eps}})$ for arbitrarily small $\eps>0$, which matches the error bound $\Oc\left(\frac{1}{\sqrt{T}}\right)$ of using delay-adaptive step sizes up to a logarithmic factor.

\paragraph{Notations}
We denote the set of non-negative integers $\{0,1,\ldots\}$ by $\Z$, the real numbers by $\R$, and the vector space of $d$-dimensional real-valued vectors by $\R^d$. We use bold lower-case letters to denote vectors. We use $\boldsymbol{I}_d$ to denote the identity matrix of dimension $d$. We use $\Nc(0, \boldsymbol{\Sigma})$ to denote the multi-dimensional Gaussian distribution with zero mean and covariance $\boldsymbol{\Sigma}$. We use $[n]$ to denote $\{1, \ldots, n\}$. We use $\| \cdot \|$ to denote the standard Euclidean norm, and $\langle\cdot,\cdot\rangle$ to denote the standard  inner product given by $\langle \x,\y\rangle=\x^T\y$. We let $\Sp_{d-1}:=\{ \x\in\R^d:\|\x\|=1 \}$ denote the unit sphere, and $\text{Unif}(\Sp_{d-1})$ denote the uniform distribution over $\Sp_{d-1}$. For any random variable $X$, we use $\sigma(X)$ to denote the $\sigma$-algebra generated by $X$.

\section{Problem Formulation}

We consider optimizing a decomposable cost function conducted by a central server and $n$ agents or workers, where they work jointly to optimize a global function, which is the sum of $n$ local functions as
\begin{align}
\label{eqn:distributed_opt}
    \min_{\x \in \Ss} f(\x):=\sum_{i=1}^n f_i(\x),
\end{align}
where $\Ss\subset\R^d$ is a closed convex set. We assume that each local function $f_i$ (and any gradient information thereof) is accessible only to agent $i$. The central server has no direct access to the $f_i$’s; instead, it maintains the global decision variable $\x$ and, at each time $t$, broadcasts $\x(t)$ to all agents and requests gradient information evaluated at $\x(t)$. The gradient returned by agent~$i$, denoted $\grad_i(\x,\bxi)$, depends on $\x$ and a random variable $\bxi$ that is independent of $\x$. In addition, due to computation and communication latency, the server may receive and apply a stale estimate computed at an earlier iterate.

\section{The Main Results}

In this section, we introduce a general framework of a class of algorithms to solve problem~(\ref{eqn:distributed_opt}). These algorithms operate on the discrete-time instants $t\in\Z$.

At time $t=0$, the central server generates an initial guess at an arbitrary point $\x(0)=\x_0\in \Ss$. Then, for $t\in\Z$, the central server sends $\x(t)$ and the time stamp $t$ to each agent. Meanwhile, each agent computes or estimates the gradient of $f_i$ at some of the $\x$'s that it has received, and sends the gradient information together with the corresponding original time stamps back to the central server. At the same time, the central server receives gradient information from each agent, and keeps track of the latest gradient information $\grad_i(\x(\tauit), \bxi(\tauit))$ from agent $i$. Here, $\x(\tauit)$, which was sent out by the central server at time $\tauit$, denotes the latest received gradient information from agent $i$ by time $t$. If the central server has never received any gradient information from agent $i$, we let $\grad_i(\x(\tauit), \bxi(\tauit))=0$ and $\tauit=-1$. Then, the central server computes an approximate gradient for the global cost function as
\begin{align}\label{eqn:grad_t}
    \grad(t)=\sum_{i=1}^n\grad_i(\x(\tauit), \bxi(\tauit)),
\end{align}
and updates the optimization variable as
\begin{align}
\label{eqn:maindyn}
    \x(t+1)=\proj[\x(t)-\step(t)\grad(t)],
\end{align}
where $\proj:\R^d\rightarrow\Ss$ is the projection operator on the set $\Ss$, and $\step(t)$ is an appropriate step size.

\subsection{Analysis}
Here, we present our main results regarding the convergence guarantee for the general framework. In order to do so, we make the following assumptions on objective functions, gradient estimators and communication delay.

The first assumption is on objective functions.

\begin{assumption}[Assumption on Objective Functions]
\label{asm:func}
    We assume ${f^*:=\inf_{\x \in \Ss} f(\x)>-\infty}$, and $f$ is continuously differentiable. 
    Further, for $i\in [n]$, $f_i: \Ss \rightarrow \R$ is  $L$-smooth, i.e., for all $\x,\y \in \Ss$, $${\|\nabla f_i(\x)-\nabla f_i(\y)\| \leq L\| \x-\y \|}.$$ 
\end{assumption}

The second assumption is on the gradient estimators. \beh{To discuss this, for the estimates (stochastic process) $\{\x(t)\}_{t\in\Z}$ generated by \eqref{eqn:maindyn}, let us define the random process $\{\pfi(t)\}_{t\in\Z}$ to be $\pfi(t):=\E[\grad_i(\x(t), \bxi(t))|\sigma(\x(t))]$ for each $i\in[n]$.} The following assumption establishes the relationship between $\grad_i(\cdot)$ and $\nabla f_i(\cdot)$.

\begin{assumption}[Assumption on Gradient Estimators]
\label{asm:grad_est}
    We assume that for the random process $\{\x(t)\}_{t\in\Z}$ generated by \eqref{eqn:maindyn},  there exists $G\in(0,\infty)$ such that
 ${\E[\|\grad_i(\x(t), \bxi(t))\|^2|\sigma(\x(t))]\leq G}$ for all $i \in [n]$.  Further, for the expected gradient estimates $\{\pfi(t)\}_{t\in\Z}$, we assume that for any $t_1,t_2\in\Z$, we have
    \begin{align*}
        \|\pfi(t_1)-\pfi(t_2)\| \leq L\| \x(t_1)-\x(t_2) \|.
    \end{align*}
    Finally, we assume that $\|\pfi(t)-\nabla f_i(\x(t))\| \leq q(t)$
    for some non-negative sequence $\{q(t)\}_{t\in \Z}$. We refer to $q(t)$ as the gradient estimator bias.
\end{assumption}


\begin{remark}
    As discussed in a closely related work \cite{zheng2024general}, Assumption \ref{asm:grad_est} is satisfied by a broad class of gradient estimators. It includes  unbiased stochastic gradient (with ${q(t)=0}$), as well as many zeroth-order gradient estimators using random perturbation. For instance, in \cite{tang2023zeroth,zheng2023zeroth}, two-point gradient estimator with Gaussian perturbation is used, i.e., the gradient estimate of $f_i$ at $\x(t)$ is
    \begin{align}
    \label{eqn:2point_grad_est}
        &\grad_i(\x(t),\z(t))=\frac{f_i(\x(t)+u(t)\z(t))-f_i(\x(t)-u(t)\z(t))}{2u(t)}\z(t),
    \end{align}
    where $u(t)$ is the smoothing radius, and ${\z(t)\sim\Nc(0,\boldsymbol{I}_d)}$. It has been shown that the bias of this gradient estimator is bounded by $L\sqrt{d}u(t)$, enabling control of the convergence rate via the choice of $u(t)$. Another example is the two-point gradient estimator based on smoothing over a unit ball~\cite{shamir2017optimal}
    \begin{align*}
        &\grad_i(\x(t),\z(t))=d\frac{f_i(\x(t)+u(t)\z(t))-f_i(\x(t)-u(t)\z(t))}{2u(t)}\z(t),
    \end{align*} 
    where $\z(t)\sim \text{Unif}(\Sp_{d-1})$, whose bias is bounded by $Lu(t)$.

\end{remark}

In the following analysis, for brevity, we write $\grad_i(\x, \bxi)$ as $\grad_i(\x)$ whenever the dependence on the random variable $\bxi$ is not needed.
Under Assumption~\ref{asm:grad_est}, we can promptly derive that the second moment of the global gradient estimator is bounded. 

\begin{lemma}
\label{lemma:bound_of_g}
    For $\grad(t)$ given in \eqref{eqn:grad_t}, we have $\E[\|\grad(t)\|^2]\leq n^2G$ for all $t\in \Z$.
\end{lemma}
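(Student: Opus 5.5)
We will bound the sum in \eqref{eqn:grad_t} term by term. By the triangle inequality followed by Cauchy--Schwarz (equivalently, convexity of $\|\cdot\|^2$),
\begin{align*}
    \|\grad(t)\|^2=\Big\|\sum_{i=1}^n\grad_i(\x(\tauit), \bxi(\tauit))\Big\|^2\leq n\sum_{i=1}^n\|\grad_i(\x(\tauit), \bxi(\tauit))\|^2 .
\end{align*}
Taking expectations, it then suffices to show that $\E[\|\grad_i(\x(\tauit), \bxi(\tauit))\|^2]\leq G$ for each $i\in[n]$. Summing over $i$ gives $n\cdot nG=n^2G$.

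For a fixed $i$, the delay $\tauit$ is itself random, so we condition on its value. Whenever $\tauit=-1$ the term is $0$ by convention, and the bound holds trivially. For $s\in\Z$ we write
\begin{align*}
    \E\big[\|\grad_i(\x(\tauit), \bxi(\tauit))\|^2\big]=\sum_{s}\E\big[\|\grad_i(\x(s),\bxi(s))\|^2\,\mathbf{1}\{\tauit=s\}\big].
\end{align*}
For each $s$ we apply the tower property with respect to $\sigma(\x(s))$ and then Assumption~\ref{asm:grad_est}, which gives $\E[\|\grad_i(\x(s),\bxi(s))\|^2\mid\sigma(\x(s))]\leq G$. Summing the resulting bounds over $s$ against the probabilities $\Prob(\tauit=s)$ yields at most $G$.

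The one delicate point is this conditioning step. Pulling the indicator $\mathbf{1}\{\tauit=s\}$ out of the conditional expectation requires the delay event to be independent of, or conditionally uninformative about, the noise $\bxi(s)$ given $\x(s)$. We will adopt the natural modeling convention implicit in the framework: the delay is determined by communication and computation latency, which does not depend on the realized gradient noise. Under this convention the conditional second-moment bound transfers directly.

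If one prefers to avoid any such condition, the same argument goes through with a slightly stronger reading of Assumption~\ref{asm:grad_est}, namely that the conditional bound holds given $\sigma(\x(s))$ together with the delay information. We expect no other obstacles; the rest is the elementary inequality $\|\sum_{i=1}^n a_i\|^2\leq n\sum_{i=1}^n\|a_i\|^2$.
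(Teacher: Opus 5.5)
Your proof is correct and follows the paper's argument: the paper also applies $\|\sum_{i=1}^n a_i\|^2\le n\sum_{i=1}^n\|a_i\|^2$ and then Assumption~\ref{asm:grad_est} termwise to get $n\cdot nG=n^2G$. Your decomposition over the value of $\tauit$ additionally makes explicit a step the paper takes without comment: the random delay must be conditionally uninformative about $\bxi(s)$ given $\x(s)$.
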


Our last assumption is on communication delay.

\begin{assumption}[Assumption on Communication Delay]
\label{asm:delay}
    We assume that the delay $t-\tauit$ is independent across time and agents. We also assume that the second moment of delay is bounded, i.e., there exists a $0<C<\infty$ s.t. $\E[(t-\tauit)^2]\leq C$ for all $t\in\Z$ and all $i\in [n]$. Further, we assume that for all $t\in \Z$, there is a $\kappa\in(0,1)$ s.t. $t-\tauit\leq(1-\kappa)t$, i.e., $\tauit\geq\kappa t$.
\end{assumption}

\begin{remark}
    Note that Assumption \ref{asm:delay} is much weaker than the assumption that all delays are upper bounded by a constant~\cite{tang2023zeroth,zheng2023zeroth,zheng2024general}. Assumption \ref{asm:delay} is inspired by the delay model described in Assumption 2.5(B) of \cite{sra2016adadelay}. As we will show later, with this weaker assumption, we can achieve the same convergence rate with respect to time $t$ as under the bounded delay assumption.
\end{remark}

With Assumptions \ref{asm:func}-\ref{asm:delay}, we obtain the key lemma underpinning our analysis, which quantifies the impact of delay on the expected distance between $\x(t)$ and $\x(\tauit)$.

\begin{lemma}
\label{lemma:effect_of_delay}
    Suppose that Assumption \ref{asm:func}-\ref{asm:delay} hold. Let $p(t)=\step(\lceil\kappa t\rceil)$. Then,  for any ${i\in [n]}$ and any $t\in\Z$, we have 
    \begin{align*}
        \E[\|\x(t)-\x(\tauit)\|^2]&\leq n^2GCp^2(t),
        \end{align*}
        and
     \begin{align*}
        \E[\|\pfi(t)-\pfi(\tauit)\|^2]&\leq n^2GCL^2p^2(t).
    \end{align*}
    Similarly, $\E[\|\nabla f(\x(t))-\nabla f(\x(\tauit))\|^2]\leq n^2GCL^2p^2(t).$
\end{lemma}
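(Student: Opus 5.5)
The plan is to write $\x(t)-\x(\tauit)$ as a telescoping sum of one-step increments and bound each increment using the nonexpansiveness of $\proj$. Write $D=t-\tauit$. When $\tauit\ge 0$ we have $\x(t)-\x(\tauit)=\sum_{s=\tauit}^{t-1}(\x(s+1)-\x(s))$. Since $\x(s)\in\Ss$, we have $\x(s)=\proj[\x(s)]$. Nonexpansiveness of the projection onto the closed convex set $\Ss$ then gives $\|\x(s+1)-\x(s)\|\le \step(s)\|\grad(s)\|$.

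Next we use the scaled-delay condition $\tauit\ge\kappa t$, so every $s$ in the sum satisfies $s\ge\lceil\kappa t\rceil$. Since the step size is diminishing (nonincreasing), this gives $\step(s)\le \step(\lceil\kappa t\rceil)=p(t)$. By Cauchy--Schwarz,
\begin{align*}
\|\x(t)-\x(\tauit)\|^2\le p^2(t)\Big(\sum_{s=\tauit}^{t-1}\|\grad(s)\|\Big)^2\le p^2(t)\, D\sum_{s=\tauit}^{t-1}\|\grad(s)\|^2 .
\end{align*}
Now take expectations. The key step, and the main obstacle, is decoupling the random number of terms $D$ from the gradient norms. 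I would condition on the delay $D$, using Assumption~\ref{asm:delay}: delays are independent across time and agents and generated independently of the optimization randomness. Conditional on $D=k$, Lemma~\ref{lemma:bound_of_g} gives $\E[\sum_{s=t-k}^{t-1}\|\grad(s)\|^2\mid D=k]\le k n^2G$. This requires that the gradient bound of Lemma~\ref{lemma:bound_of_g} remains valid under the conditioning, which is where independence of the delay process is used. Hence $\E[\|\x(t)-\x(\tauit)\|^2]\le p^2(t)n^2G\,\E[D^2]\le n^2GCp^2(t)$.

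The case $\tauit=-1$ cannot occur for $t\ge1$, because $\tauit\ge\kappa t>0$ there. At $t=0$ the quantity is interpreted through the convention $\grad_i=0$, so the first-step bound is trivial or handled separately. If this independence argument is judged too delicate, a fallback is to bound $D\le(1-\kappa)t$ only inside the ranges of summation while keeping $D^2$ as the weight. That route still needs the same conditioning step, so I would present the conditional argument.

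The remaining two bounds follow directly from the first. The Lipschitz condition on $\pfi$ in Assumption~\ref{asm:grad_est} gives $\|\pfi(t)-\pfi(\tauit)\|^2\le L^2\|\x(t)-\x(\tauit)\|^2$, and taking expectations yields $n^2GCL^2p^2(t)$. For $\nabla f$, I would apply $L$-smoothness of $f$ in the same way. Strictly, summing the $f_i$ gives $nL$-smoothness of $f$, so I would either read $L$ as the smoothness constant of $f$ or note that the statement absorbs this constant, consistent with how Assumption~\ref{asm:func} is used.
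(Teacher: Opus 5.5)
Your proposal is correct and follows essentially the paper's argument: telescoping, nonexpansiveness of $\proj$, Cauchy--Schwarz, conditioning on $\tauit$ to invoke Lemma~\ref{lemma:bound_of_g}, and $\E[(t-\tauit)^2]\le C$. The one difference is that you bound $\step(s)\le p(t)$ pointwise before taking expectations, which skips the paper's extra Cauchy--Schwarz-in-expectation step; your remark that summing the $f_i$ only gives $nL$-smoothness of $f$ is also fair, since the last bound then strictly carries an extra factor $n^2$ unless $L$ is read as the smoothness constant of $f$, a point the paper's ``follows similarly'' glosses over.
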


\begin{proof} 
We have
\begin{align*}
    &\E[\|\x(t)-\x(\tauit)\|^2]
    =\E[\|\sum_{\tau=1}^{t-\tauit}(\x(t-\tau+1)-\x(t-\tau))\|^2]\\
    =&\E[\|\sum_{\tau=1}^{t-\tauit}\big(\proj[\x(t-\tau)-\step(t-\tau)\grad(t-\tau)]-\proj[\x(t-\tau)]\big)\|^2]\\
    \leq&\E[(\sum_{\tau=1}^{t-\tauit}\big\|\proj[\x(t-\tau)-\step(t-\tau)\grad(t-\tau)]-\proj[\x(t-\tau)]\big\|)^2]\\
    \stackrel{\rm{(a)}}{\leq}&\E[(\sum_{\tau=1}^{t-\tauit}\|\step(t-\tau)\grad(t-\tau)\|)^2]
    \stackrel{\rm{(b)}}{\leq}\E[\sum_{\tau=1}^{t-\tauit}\step^2(t-\tau)\sum_{\tau=1}^{t-\tauit}\|\grad(t-\tau)\|^2]\cr 
    \stackrel{\rm{(c)}}{\leq}&\E\left[\E[\sum_{\tau=1}^{t-\tauit}\step^2(t-\tau)\sum_{\tau=1}^{t-\tauit}\|\grad(t-\tau)\|^2|\tauit]\right]
    \end{align*}
where (a) follows from the nonexpansive property of projection (see Corollary 2.2.3 in~\cite{nesterov2018lectures}), (b) follows from the Cauchy-Schwartz Inequality, and (c) follows from the law of total expectation. But, 
\begin{align*}
    &\E\left[\E[\sum_{\tau=1}^{t-\tauit}\step^2(t-\tau)\sum_{\tau=1}^{t-\tauit}\|\grad(t-\tau)\|^2|\tauit]\right]
    \stackrel{\rm{(a)}}{\leq}n^2G\E[(t-\tauit)\sum_{\tau=1}^{t-\tauit}\step^2(t-\tau)]
    \\
    \stackrel{\rm{(b)}}{\leq}&n^2G\sqrt{\E[(t-\tauit)^2]\E[(\sum_{\tau=1}^{t-\tauit}\step^2(t-\tau))^2]}
    \stackrel{\rm{(c)}}{\leq}n^2G\sqrt{C\E[\big((t-\tauit)\step^2(\tauit)\big)^2]}\\
    \stackrel{\rm{(d)}}{\leq}&n^2G\sqrt{C\E[(t-\tauit)^2]\step^4(\lceil\kappa t\rceil)}
    \stackrel{\rm{(e)}}{\leq}n^2GC\step^2(\lceil\kappa t\rceil)
    =n^2GCp^2(t),
\end{align*}
where (a) follows from Lemma~\ref{lemma:bound_of_g},  (b) follows from the Cauchy-Schwartz Inequality, (c), (d), and (e) use Assumption~\ref{asm:delay}, and (c) and (d) use the diminishing property of the step size sequence.

For the second inequality, by Assumption \ref{asm:grad_est}, we have
\begin{align*}
    \E[\|\pfi(t)-\pfi(\tauit)\|^2]
    \leq L^2\E[\|\x(t)-\x(\tauit)\|^2]
    \leq n^2GCL^2p^2(t). 
\end{align*}

The last inequality follows similarly as the above inequality.
\end{proof}
\vspace{-0.5cm}
\subsubsection{Main Results}

Now we are ready to present our main results that characterize the convergence rate of \eqref{eqn:maindyn} under our assumptions. The first result establishes the convergence properties of~\eqref{eqn:maindyn} for general (non-convex) function $f$.

\begin{theorem}\label{thm:nonconvex}
    Let $\{\pg(t)\}_{t\in\Z}$ be defined by $$\pg(t):=\frac{1}{\step(t)}\left(\x(t)-\proj[\x(t)-\step(t)\nabla f(\x(t))]\right).$$
    Let the step size sequence $\{\step(t)\}$ be a decreasing sequence s.t. $\lim_{t\rightarrow\infty}\frac{\step(\lceil\kappa t\rceil)}{\step(t)}=\bar{\eta}>0$ where $\bar{\eta}>0$ is a constant and ${\sum_{t=0}^{\infty}\step(t)=\infty}$. Under Assumptions \ref{asm:func}-\ref{asm:delay}, we have
    \begin{align*}
        \frac{1}{T+1}\!\sum_{t=0}^{T}{\E[\|\pg(t)\|^2]}=\Oc\left(\frac{\sum_{t=0}^T\left(\step(t)+\step^2(t)+\step(t)q^2(t)\right)}{T\step(T)}\right).
    \end{align*}
    In particular, for $\step(t)=\frac{\step_0}{(t+1)^\alpha}$ for some constant $\step_0>0$ and $\alpha\in(0,1)$, and any non-increasing $\{q(t)$\}, we have $\frac{1}{T+1}\sum_{t=0}^{T}{\E[\|\pg(t)\|^2]}=\Oc(1)$.
\end{theorem}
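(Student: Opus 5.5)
We will run the standard descent-lemma argument for projected SGD on the smooth objective $f$ (which is $nL$-smooth by Assumption~\ref{asm:func}), and treat the stale, biased estimate $\grad(t)$ as a perturbation of $\nabla f(\x(t))$. Write $\grad(t)=\nabla f(\x(t))+\boldsymbol{e}(t)$ and split the error into three parts:
\begin{itemize}
\item a \emph{delay part} $\sum_i(\pfi(\tauit)-\pfi(t))$;
\item a \emph{bias part} $\sum_i(\pfi(t)-\nabla f_i(\x(t)))$, of norm at most $nq(t)$;
\item a \emph{noise part} $\sum_i(\grad_i(\x(\tauit))-\pfi(\tauit))$.
\end{itemize}

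\textbf{Per-step inequality.} Smoothness gives
\[
f(\x(t+1))\le f(\x(t))+\langle\nabla f(\x(t)),\x(t+1)-\x(t)\rangle+\tfrac{nL}{2}\|\x(t+1)-\x(t)\|^2 .
\]
We control the inner product with the projection inequality. Let $\tilde\pg(t):=(\x(t)-\x(t+1))/\step(t)$ be the gradient mapping computed with $\grad(t)$. The variational characterization of $\proj$ gives $\langle\grad(t),\tilde\pg(t)\rangle\ge\|\tilde\pg(t)\|^2$. Hence
\[
\langle\nabla f(\x(t)),\x(t+1)-\x(t)\rangle\le-\step(t)\|\tilde\pg(t)\|^2+\step(t)\langle\boldsymbol{e}(t),\tilde\pg(t)\rangle .
\]
Young's inequality absorbs the cross term into $-\tfrac{\step(t)}{2}\|\tilde\pg(t)\|^2+\tfrac{\step(t)}2\|\boldsymbol{e}(t)\|^2$. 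Nonexpansiveness of $\proj$ gives $\|\pg(t)-\tilde\pg(t)\|\le\|\boldsymbol{e}(t)\|$, so $\|\pg(t)\|^2\le2\|\tilde\pg(t)\|^2+2\|\boldsymbol{e}(t)\|^2$. The quadratic term $\tfrac{nL}2\step^2(t)\|\tilde\pg(t)\|^2$ is bounded by $\tfrac{nL}{2}\step^2(t)\|\grad(t)\|^2$. Lemma~\ref{lemma:bound_of_g} bounds its expectation by $\tfrac{nL}{2}\step^2(t)n^2G$; no smallness of the step is needed for this.

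\textbf{Bounding $\E\|\boldsymbol{e}(t)\|^2$.} The noise part has second moment $\Oc(n^2G)$ by Assumption~\ref{asm:grad_est}. Lemma~\ref{lemma:effect_of_delay} bounds the delay part by $\Oc(p^2(t))$. The bias part contributes $\Oc(q^2(t))$. Taking expectations and summing from $0$ to $T$ then gives
\[
\sum_{t=0}^T\step(t)\E\|\pg(t)\|^2\le 4\big(f(\x_0)-f^*\big)+\Oc\Big(\sum_t\big(\step(t)+\step^2(t)+\step(t)p^2(t)+\step(t)q^2(t)\big)\Big).
\]
By the hypothesis $\step(\lceil\kappa t\rceil)/\step(t)\to\bar\step$, we have $p(t)=\Oc(\step(t))$. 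Since $\step$ is decreasing and bounded, $\step p^2=\Oc(\step^2)$, so this term is absorbed into the $\step^2$ term. The constant $f(\x_0)-f^*$ is also $\Oc(\sum_t\step(t))$ because $\sum\step(t)=\infty$.

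\textbf{Converting the weighted sum.} Because $\step$ is decreasing, $\step(t)\ge\step(T)$ for $t\le T$, so
\[
\tfrac1{T+1}\sum_{t=0}^T\E\|\pg(t)\|^2\le\frac{\sum_t\step(t)\E\|\pg(t)\|^2}{(T+1)\step(T)}.
\]
This yields the stated bound. For $\step(t)=\step_0(t+1)^{-\alpha}$ the ratio condition holds with $\bar\step=\kappa^{-\alpha}$. We have $\sum_{t\le T}\step(t)\asymp T^{1-\alpha}$ and $T\step(T)\asymp T^{1-\alpha}$. The $\step^2$ sum is at most of the same order, and the $q^2$ term is at most $q^2(0)\sum\step$. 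The ratio is therefore $\Oc(1)$.

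\textbf{Main obstacle.} The delicate step is the cross term. The noise is evaluated at the stale point $\x(\tauit)$ and is correlated with $\tilde\pg(t)$ through the projection, so we cannot make it vanish in expectation. The Young's-inequality route sidesteps this, at the price of an $\Oc(\step(t))$ term rather than $\Oc(\step^2(t))$. That price is exactly why the bound only gives $\Oc(1)$ and contains the $\sum\step(t)$ term. A secondary point to check is whether $\tauit$ is random: we will need Lemma~\ref{lemma:effect_of_delay} together with the conditional second-moment bound, applied through the tower property as in the proof of that lemma.
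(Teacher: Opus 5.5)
Your proof is correct. It shares the paper's skeleton: the descent lemma, the gradient-mapping properties \eqref{eqn:Ps_prop1}--\eqref{eqn:Ps_prop2}, telescoping to bound $V(T)=\sum_{t=0}^T\step(t)\E\|\pg(t)\|^2$, then dividing by $(T+1)\step(T)$. However, the two key steps are done differently.

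The paper extracts descent from \eqref{eqn:Ps_prop1} at $\bv=\nabla f(\x(t))$. It telescopes $\Ps(\x(t),\cdot)$ from $\grad(t)$ through $\sum_i\pfi(\tauit)$, $\sum_i\pfi(t)$ and $\sum_i\nabla f_i(\x(t))$. It then applies Young's inequality against $\nabla f(\x(t))$ in each of the three difference terms, with weight $p(t)$ on the delay term. This needs the a priori bound $\E\|\nabla f(\x(t))\|^2\le 2n^2(q^2(t)+G)$. It also pays $\tfrac12\E\|\nabla f(\x(t))\|^2$ at full weight in the noise and bias pieces, even when those errors vanish.

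You instead extract descent from \eqref{eqn:Ps_prop1} at $\bv=\grad(t)$. You absorb the whole error $\boldsymbol{e}(t)$ into half of $\|\tilde{\pg}(t)\|^2$ and pass to $\pg(t)$ via \eqref{eqn:Ps_prop2}, at the cost of a factor $4$ on $f(\x_0)-f^*$. The payoff is that the perturbation enters only through $\E\|\boldsymbol{e}(t)\|^2$. The coefficient of $\sum_t\step(t)$ is then visibly due to noise and bias alone; with noiseless, unbiased estimates it would shrink to an $\Oc(\sum_t\step(t)p^2(t))$ delay cost. You also use the correct smoothness constant $nL$ for $f$, where the paper writes $L$.

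For the final conversion, the paper's H\"older argument with $\theta\to1$ yields exactly $M(T)\le V(T)/((T+1)\step(T))$. You obtain the same bound in one line from $\step(t)\ge\step(T)$.

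Your decomposition also makes one more point transparent. Since $\|\pfi(s)\|^2\le G$ almost surely by conditional Jensen, the delay part is crudely $\Oc(n^2G)$ and is dominated by the noise term. So the stated $\Oc(\cdot)$ bound does not actually need Lemma~\ref{lemma:effect_of_delay} or the ratio condition on $\step$.
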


\begin{remark}
    Similar to \cite{ghadimi2016mini}, here we address constrained optimization; accordingly, in the nonconvex setting we measure stationarity via the projected gradient mapping $\pg(t)$ rather than $\nabla f(\x(t))$. 
    In addition, we can rewrite our result in the same way as Corollary 2 in~\cite{ghadimi2016mini} as follows. If $s$ is a nonnegative integer-valued random variable with the probability mass function
   $\Prob(s=t)=\frac{\step(t)}{\sum_{t=0}^T\step(t)}$
    for any $t=0,\ldots,T$, we have
    \begin{align*}
        \E_s[\|\pg(s)\|^2]=\Oc\left(\frac{\sum_{t=0}^T\left(\step(t)+\step^2(t)+\step(t)q^2(t)\right)}{\sum_{t=0}^T\step(t)}\right).
    \end{align*}
    This result shows that $\E[\|\pg(s)\|^2]$ converges to a neighborhood around $0$, whose radius is governed by the coefficient multiplying $\sum_{t=0}^T \step(t)$ in the numerator. As shown in the proof below, this coefficient equals $\frac{5}{2}n^2G$. This is consistent with Corollary 2 in~\cite{ghadimi2016mini}, which states that the neighborhood size scales proportionally with the second moment of the stochastic gradient and decreases inversely with the batch size.
\end{remark}

\begin{proof}
    (\textit{Proof of Theorem \ref{thm:nonconvex}})
    

    For $\x\in\Ss$, $\bv\in\R^d$, and $t\in \Z$, we denote \beh{$\Ps(\x,\bv,t)=\frac{1}{\step(t)}\left(\x-\proj[\x-\step(t)\bv]\right)$}.
    Thus, our goal is to establish a bound for ${\pg(t)=\Ps(\x(t),\nabla f(\x(t)),t)}$.

    As proven in  \cite{ghadimi2016mini}, $\Ps(\x,\bv,t)$ has the following properties: for any ${\bv,\bv_1,\bv_2\in\R^d}$ and $t\in \Z$,
    \begin{align}
        \label{eqn:Ps_prop1}
        &\|\Ps(\x,\bv,t)\|^2 \leq \langle\bv, \Ps(\x,\bv,t)\rangle \leq \|\bv\|^2,\\
        \label{eqn:Ps_prop2}
        &\|\Ps(\x,\bv_1,t)-\Ps(\x,\bv_2,t)\| \leq \|\bv_1-\bv_2\|.
    \end{align}

     With an abuse of notation, for brevity, let us denote $\Ps(\x(t),\grad(t),t)$ by $\Ps(\x(t),\grad(t))$. Using $L$-smooth condition of $f$, \eqref{eqn:maindyn} implies
    \begin{align*}
        f(\x(t+1)) \leq f(\x(t))-\step(t)\langle\nabla f(\x(t)),\Ps(\x(t),\grad(t))\rangle+\frac{L}{2}\step^2(t)\|\Ps(\x(t),\grad(t))\|^2.
    \end{align*}
    Taking the expected value of both sides, we have
    \begin{align}
    \label{eqn:expected_iter}
        \E[f(\x(t+1))] 
        \leq\E[f(\x(t))]+\step(t)\E[-\langle\nabla f(\x(t)),\Ps(\x(t),\grad(t))\rangle]+\frac{L}{2}\step^2(t)\E[\|\Ps(\x(t),\grad(t))\|^2].
    \end{align}
    For the second term in the right hand side of \eqref{eqn:expected_iter}, by adding and subtracting intermediate terms, we have 
    \begin{align}
        \label{eqn:4inner_products}
        &\E[-\langle\nabla f(\x(t)),\Ps(\x(t),\grad(t))\rangle]\cr
        =&\E[-\langle\nabla f(\x(t)),\Ps(\x(t),\grad(t))-\Ps\left(\x(t),\sum_{i=1}^n\pfi(\tauit)\right)\rangle]\cr
        &+\E[-\langle\nabla f(\x(t)),\Ps\left(\x(t),\sum_{i=1}^n\pfi(\tauit)\right)-\Ps\left(\x(t),\sum_{i=1}^n\pfi(t)\right)\rangle]\cr
        &+\E[-\langle\nabla f(\x(t)),\Ps\left(\x(t),\sum_{i=1}^n\pfi(t)\right)-\Ps\left(\x(t),\sum_{i=1}^n\nabla f_i(\x(t))\right)\rangle]\cr
        &+\E[-\langle\nabla f(\x(t)),\Ps\left(\x(t),\sum_{i=1}^n\nabla f_i(\x(t))\right)\rangle].
    \end{align}

    To bound each term in \eqref{eqn:4inner_products}, \beh{using the inequality $(a+b)^2\leq 2(a^2+b^2)$}, we have
    \begin{align}
    \label{eqn:bound_of_nabla_f}
        \E[|\|\nabla f(\x(t))\|^2]=&\E[|\|\sum_{i=1}^n\nabla f_i(\x(t))\|^2]
        \leq 2\E[\|\sum_{i=1}^n(\nabla f_i(\x(t))-\pfi(t))\|^2]+2\E[\|\sum_{i=1}^n\pfi(t)\|^2]\cr
        \stackrel{\rm{(a)}}{\leq}& 2\E[(\sum_{i=1}^n\|\nabla f_i(\x(t))-\pfi(t)\|)^2]+2\E[\|\sum_{i=1}^n\grad_i(\x(t))\|^2]
        \stackrel{\rm{(b)}}{\leq} 2n^2(q^2(t)+G),
    \end{align}
    where (a) follows from \beh{triangle inequality for norms and $\|\E[\bv]\|^2\leq\E[\|\bv\|^2]$ for any random vector $\bv$.} In addition, note that \beh{$\pm \langle \bu,\bv\rangle\leq \frac{1}{2}(a \|\bu\|^2+\frac{1}{a}\|\bv\|^2)$ for any $a>0$ and for all vectors $\bu,\bv\in \R^d$. }
    
    Thus, by \eqref{eqn:bound_of_nabla_f}, \eqref{eqn:Ps_prop2}, and Lemma \ref{lemma:bound_of_g}, the first term in \eqref{eqn:4inner_products} can be bounded as
    \begin{align*}
        &\E[-\langle\nabla f(\x(t)),\Ps(\x(t),\grad(t))-\Ps\left(\x(t),\sum_{i=1}^n\pfi(\tauit)\right)\rangle]\\
        \leq&\frac{1}{2}\E[|\|\nabla f(\x(t))\|^2]+\frac{1}{2}\E[\|\Ps(\x(t),\grad(t))-\Ps\left(\x(t),\sum_{i=1}^n\pfi(\tauit)\right)\|^2]\\
        \leq&\frac{1}{2}\E[|\|\nabla f(\x(t))\|^2]+\frac{1}{2}\E[\|\grad(t)-\sum_{i=1}^n\pfi(\tauit)\|^2]\\
        \leq&n^2q^2(t)+n^2G+\frac{1}{2}n^2G
        =n^2q^2(t)+\frac{3}{2}n^2G.
    \end{align*}

    By \eqref{eqn:bound_of_nabla_f}, \eqref{eqn:Ps_prop2}, and Lemma \ref{lemma:effect_of_delay}, the second term in \eqref{eqn:4inner_products} can be bounded as
    \begin{align*}
        &\E[-\langle\nabla f(\x(t)),\Ps\left(\x(t),\sum_{i=1}^n\pfi(\tauit)\right)-\Ps\left(\x(t),\sum_{i=1}^n\pfi(t)\right)\rangle]\\
        \leq&\frac{p(t)}{2}\E[|\|\nabla f(\x(t))\|^2]+\frac{1}{2p(t)}\E[\|\Ps\left(\x(t),\sum_{i=1}^n\pfi(\tauit)\right)-\Ps\left(\x(t),\sum_{i=1}^n\pfi(t)\right)\|^2]\\
        \leq&\frac{p(t)}{2}\E[|\|\nabla f(\x(t))\|^2]+\frac{1}{2p(t)}\E[\|\sum_{i=1}^n\pfi(\tauit)-\sum_{i=1}^n\pfi(t)\|^2]\\
        \leq&n^2p(t)q^2(t)+n^2Gp(t)+\frac{1}{2}n^4GCL^2p(t).
    \end{align*}

    Using a similar argument as above, by \eqref{eqn:bound_of_nabla_f}, \eqref{eqn:Ps_prop2}, and Assumption \ref{asm:grad_est}, the third term in \eqref{eqn:4inner_products} can be bounded as
    \begin{align*}
        &\E[-\langle\nabla f(\x(t)),\Ps\left(\x(t),\sum_{i=1}^n\pfi(t)\right)-\Ps\left(\x(t),\sum_{i=1}^n\nabla f_i(\x(t))\right)\rangle]\\
        \leq&\frac{1}{2}\E[|\|\nabla f(\x(t))\|^2]+\frac{1}{2}\E[\|\Ps\left(\x(t),\sum_{i=1}^n\pfi(t)\right)-\Ps\left(\x(t),\sum_{i=1}^n\nabla f_i(\x(t))\right)\|^2]\\
        \leq&\frac{1}{2}\E[|\|\nabla f(\x(t))\|^2]+\frac{1}{2}\E[\|\sum_{i=1}^n\pfi(t)-\sum_{i=1}^n\nabla f_i(\x(t))\|^2]\\
        \leq&n^2q^2(t)+n^2G+\frac{1}{2}n^2q^2(t)
        =\frac{3}{2}n^2q^2(t)+n^2G.
    \end{align*}

    By \eqref{eqn:Ps_prop1}, the last term in \eqref{eqn:4inner_products} can be bounded as
    \begin{align*}
        \E[-\langle\nabla f(\x(t)),\Ps\left(\x(t),\sum_{i=1}^n\nabla f_i(\x(t))\right)\rangle]
        \leq-\E[\|\Ps(\x(t),\nabla f(\x(t)))\|^2]=-\E[\|\pg(t)\|^2].
    \end{align*}

    Then, \eqref{eqn:expected_iter} becomes
    \begin{align*}
        \E[&f(\x(t+1))] 
        \leq\E[f(\x(t))]-\step(t)\E[\|\pg(t)\|^2]\\
        &+\frac{5}{2}n^2G\step(t)+\frac{1}{2}n^2GL\step^2(t)+n^2G(1+\frac{1}{2}n^2CL^2)\step(t)p(t)+\frac{5}{2}n^2\step(t)q^2(t)+n^2\step(t)p(t)q^2(t).
    \end{align*}

    By taking the telescopic sum over $t=0,\ldots, T$ of the above inequality, we get
    \begin{align*}
        \sum_{t=0}^T \step(t)\E[\| \pg(t)\|^2]
        \leq \E[f(\x(0))]-f^*+\frac{5}{2}n^2G\sum_{t=0}^T\step(t)+\frac{1}{2}n^2GL\sum_{t=0}^T\step^2(t)\\
        +n^2G(1+\frac{1}{2}n^2CL^2)\sum_{t=0}^T\step(t)p(t)+\frac{5}{2}n^2\sum_{t=0}^T\step(t)q^2(t)+n^2\sum_{t=0}^T\step(t)p(t)q^2(t).
    \end{align*}

    Let $V(T):=\sum_{t=0}^T \step(t)\E[\| \pg(t)\|^2]$. Since $p(t)=\step(\lceil\kappa t\rceil)=\Oc\left(\step(t)\right)$, we have 
    \begin{align}
    \label{eqn:VT}
        V(T)=\Oc\left(\sum_{t=0}^T\left(\step(t)+\step^2(t)+\step(t)q^2(t)\right)\right).
    \end{align}

    Next, we establish an upper bound for $M(T):=\frac{1}{T+1}\sum_{t=0}^T \E[\| \pg(t)\|^2]$.
    For any $\theta \in (0,1)$,  define
    \begin{align*}
        M_\theta(T)=\left[\frac{1}{T+1}\sum_{t=0}^T \left(\E[\| \pg(t)\|^2]\right)^\theta\right]^\frac{1}{\theta}.
    \end{align*}
    Note that by H\"older's inequality (Theorem 6.2 in \cite{folland1999real}), for any $p,q>1$ with ${\frac{1}{p}+\frac{1}{q}=1}$, and non-negative sequences $\{a_t\}_{t=0}^T$ and $\{b_t\}_{t=0}^T$, we have
    \begin{align}
    \label{eqn:holder}
        \left(\sum_{t=0}^Ta_tb_t\right)^q \leq \left(\sum_{t=0}^T{a_t}^p\right)^{\frac{q}{p}}\sum_{t=0}^T{b_t}^q.
    \end{align}
    Let $a_t=\left( \frac{1}{\step(t)} \right)^\theta$, $b_t=\left(\step(t)\E[\| \pg (t)\|^2]\right)^\theta$, and $(p,q)=\left(\frac{1}{1-\theta},\frac{1}{\theta}\right)$. Noting  $\sum_{t=0}^Tb_t^{\frac{1}{\theta}}=V(T)$, and using~\eqref{eqn:holder}, we have
    \begin{align*}
        M_\theta(T)&=\left(\frac{1}{T+1}\sum_{t=0}^T{a_tb_t} \right)^\frac{1}{\theta}
        \leq (T+1)^{-\frac{1}{\theta}}\left(\sum_{t=0}^T a_t ^\frac{1}{1-\theta} \right)^\frac{1-\theta}{\theta} \sum_{t=0}^T b_t ^\frac{1}{\theta}\\
        &=(T+1)^{-\frac{1}{\theta}}\left(\sum_{t=0}^T \left(\frac{1}{\step(t)}\right) ^\frac{\theta}{1-\theta} \right)^\frac{1-\theta}{\theta}V(T).
    \end{align*}
    Using a similar argument as in the proof of Proposition 1 in \cite{reisizadeh2022dimix}, it can be shown that for any $\theta \in(0,1)$,
    \begin{align*}
        \sum_{t=0}^T \E[\| \pg(t)\|^2] \leq 
        \left[\sum_{t=0}^T \left(\E[\| \pg(t)\|^2]\right)^\theta\right]^\frac{1}{\theta}.
    \end{align*}
    Now we can bound $M(T)$ as
    \begin{align}
    \label{eqn:bound_for_MT}
        M(T) &\leq (T+1)^{\frac{1}{\theta}-1}M_\theta(T)
        \leq (T+1)^{-1}\left(\sum_{t=0}^T \left(\frac{1}{\step(t)}\right) ^\frac{\theta}{1-\theta} \right)^\frac{1-\theta}{\theta}V(T).
    \end{align}
    Noting that $\{\step(t)\}$ is a diminishing sequence, we have
    \begin{align*}
        \lim_{\theta\rightarrow1}\left(\sum_{t=0}^T \left(\frac{1}{\step(t)}\right) ^\frac{\theta}{1-\theta} \right)^\frac{1-\theta}{\theta}
        =\max_{0\leq t\leq T}\frac{1}{\step(t)}=\frac{1}{\step(T)}.
    \end{align*}
    Since (\ref{eqn:bound_for_MT}) holds for all $\theta\in(0,1)$,
        $M(T)=\Oc\left(T^{-1}\frac{1}{\step(T)}V(T)\right)$.
    Finally, recalling the bound of $V(T)$ in (\ref{eqn:VT}), we have
    \begin{align*}
        \frac{1}{T+1}\!\sum_{t=0}^{T}{\E[\|\pg(t)\|^2]}=\Oc\left(\frac{\sum_{t=0}^T\left(\step(t)+\step^2(t)+\step(t)q^2(t)\right)}{T\step(T)}\right).
    \end{align*}
\end{proof}

Our second result establishes convergence guarantees for strongly convex cost functions $f$.

\begin{theorem}\label{thm:stronglyconvex}
For $\mu$-strongly convex function $f$, i.e., $f(\y)\geq f(\x)+\langle\nabla f(\x), \y-\x\rangle+\frac{\mu}{2}\|\y-\x\|^2$ for all $\x,\y\in\Ss$, let $\x^*=\argmin_{\x\in\Ss}f(\x)$. Under Assumptions \ref{asm:func}-\ref{asm:delay}, if $\{\step(t)\}$ is a decreasing sequence s.t. $\lim_{t\rightarrow\infty}\frac{\step(\lceil\kappa t\rceil)}{\step(t)}=\bar{\eta}>0$ where $\bar{\eta}>0$ is a constant, $\sum_{t=0}^\infty \step(t)=\infty$, ${\sum_{t=0}^\infty\step^2(t)<\infty}$, and $\sum_{t=0}^\infty\step(t)q^2(t)<\infty$, then we have ${\lim_{t\rightarrow\infty}\E[\|\x(t)-\x^*\|^2]=0}$. 

Moreover, if $\step(t)=\frac{\step_0}{t+1}$ for some constant $\step_0>0$, and $q(t)=\frac{q_0}{(t+1)^\beta}$ for some constant $q_0>0$ and $\beta\geq\frac{1}{2}$, then $\E[\|\x(t)-\x^*\|^2]=\Oc(\frac{1}{t})$. 
\end{theorem}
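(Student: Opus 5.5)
I will track $D(t):=\E[\|\x(t)-\x^*\|^2]$ and derive a one-step recursion of the form
\begin{align*}
D(t+1)\leq (1-c\,\step(t))D(t)+\Oc\big(\step^2(t)+\step(t)p(t)+\step(t)q^2(t)\big),
\end{align*}
with $c>0$ depending on $\mu$. The first part then follows from a Robbins--Siegmund / Chung-type lemma for deterministic sequences. The rate follows by induction once $\step(t)=\step_0/(t+1)$ is inserted.

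For the recursion, I would use $\x^*=\proj[\x^*]$ and nonexpansiveness of $\proj$ (as in Lemma~\ref{lemma:effect_of_delay}) to get
\begin{align*}
\|\x(t+1)-\x^*\|^2\leq \|\x(t)-\x^*\|^2-2\step(t)\langle \grad(t),\x(t)-\x^*\rangle+\step^2(t)\|\grad(t)\|^2.
\end{align*}
The last term contributes at most $n^2G\step^2(t)$ by Lemma~\ref{lemma:bound_of_g}. For the cross term I would split $\grad(t)$ as
\begin{align*}
\grad(t)=\Big(\grad(t)-\sum_i\pfi(\tauit)\Big)+\sum_i\big(\pfi(\tauit)-\pfi(t)\big)+\sum_i\big(\pfi(t)-\nabla f_i(\x(t))\big)+\nabla f(\x(t)).
\end{align*}

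\textbf{Exact gradient piece.} Strong convexity together with optimality of $\x^*$ gives $\langle\nabla f(\x(t)),\x(t)-\x^*\rangle\geq \mu\|\x(t)-\x^*\|^2$. Indeed, $\langle \nabla f(\x)-\nabla f(\x^*),\x-\x^*\rangle\geq\mu\|\x-\x^*\|^2$ and $\langle\nabla f(\x^*),\x-\x^*\rangle\geq0$.

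\textbf{Bias piece.} I would apply Young's inequality with weight $\mu/2$, which costs $\frac{\mu}{2}D(t)+\frac{2n^2}{\mu}q^2(t)$.

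\textbf{Delay piece.} I would again use Young's inequality, now with Lemma~\ref{lemma:effect_of_delay}. This is the step where the weight matters. Applying Young with weight $\mu/4$ gives a term $\Oc(p^2(t)/\mu)$ multiplied by $\step(t)$. That is fine, but I would check whether a better $\step(t)p(t)$ form is needed. Since $p(t)=\Oc(\step(t))$ by the limit assumption $\step(\lceil\kappa t\rceil)/\step(t)\to\bar\eta$, the error is $\Oc(\step^3(t))$, which is summable.

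\textbf{Noise piece.} This is the main obstacle. The term $\langle \grad(t)-\sum_i\pfi(\tauit),\x(t)-\x^*\rangle$ does not have zero mean, because $\x(t)$ is not measurable with respect to the $\sigma$-algebra at time $\tauit$: $\x(t)$ depends on the updates between $\tauit$ and $t$. I would handle this by writing
\begin{align*}
\x(t)-\x^*=\big(\x(\tauit)-\x^*\big)+\big(\x(t)-\x(\tauit)\big).
\end{align*}
Conditioned on $\x(\tauit)$, the first inner product has zero mean, since $\bxi(\tauit)$ is fresh and by the definition of $\pfi$; I would use the independence of delays to justify this conditioning. The second inner product is bounded by Cauchy--Schwarz, using $\E[\|\grad_i\|^2]\leq G$ and Lemma~\ref{lemma:effect_of_delay}, giving $\Oc(p(t))$. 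After the factor $\step(t)$ this becomes $\Oc(\step(t)p(t))=\Oc(\step^2(t))$, again summable.

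\textbf{Conclusion.} Collecting the pieces yields $D(t+1)\leq(1-\frac{\mu}{2}\step(t))D(t)+e(t)$ with $\sum_t e(t)<\infty$ and $\sum_t\step(t)=\infty$. The standard lemma then gives $D(t)\to0$, the first claim.

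For the rate, take $\step(t)=\step_0/(t+1)$ and $q(t)=q_0/(t+1)^\beta$ with $\beta\geq\frac12$. Then $e(t)=\Oc(1/(t+1)^2)$, since $\step q^2=\Oc((t+1)^{-1-2\beta})$ and $2\beta\geq1$, and $p(t)=\Oc(1/(t+1))$. With $a=\mu\step_0/2$ the recursion becomes
\begin{align*}
D(t+1)\leq\big(1-\tfrac{a}{t+1}\big)D(t)+\tfrac{K}{(t+1)^2}.
\end{align*}
If $a>1$, induction gives $D(t)\leq \max\{D_0',\,K/(a-1)\}/(t+1)$. The case $a\leq1$ is a caveat: the rate would degrade, so the stated $\Oc(1/t)$ implicitly requires $\step_0>2/\mu$. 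I would state this condition explicitly, or absorb it by tightening the Young constants.
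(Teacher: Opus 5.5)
Your proposal is correct and follows the paper's proof essentially step for step. It uses the same nonexpansive-projection expansion and the same five-term split of $\E[\langle \x(t)-\x^*,\grad(t)\rangle]$: a zero-mean fresh-noise term, Cauchy--Schwarz with Lemma~\ref{lemma:effect_of_delay} for the drift piece $\x(t)-\x(\tauit)$, Young's inequality for the delay and bias pieces, and strong convexity for the exact-gradient piece. It then concludes with the deterministic Robbins--Siegmund lemma and a standard $1/t$ recursion lemma. Your caveat on $\step_0$ is warranted. The paper's recursion factor $1-\mu\step(t)$ needs $\mu\step_0>1$ to give $\Oc(1/t)$, and the paper invokes that lemma without stating the condition. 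No retuning of Young weights removes such a threshold: already for $f(x)=\frac{\mu}{2}x^2$ with exact, undelayed gradients one gets $x(t)^2=\Theta(t^{-2\mu\step_0})$. So the condition should be stated explicitly rather than absorbed.
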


\begin{proof}
(\textit{Proof sketch of Theorem \ref{thm:stronglyconvex}})

    We first establish a recursive inequality for $v(t):=\E[\|\x(t)-\x^*\|^2]$.

    From (\ref{eqn:maindyn}), using the nonexpansive property of projection (see Corollary 2.2.3 in~\cite{nesterov2018lectures}), we have
    \begin{align*}
        \|\x(t+1)-\x^*\|^2=&\|\proj[\x(t)-\step(t)\grad(t)]-\proj[\x^*]\|^2
        \leq \|\x(t)-\x^*-\step(t)\grad(t)\|^2\\
        =&\|\x(t)-\x^*\|^2+\step^2(t)\|\grad(t)\|^2-2\step(t)\langle\x(t)-\x^*,\grad(t)\rangle.
    \end{align*}
    The following arguments closely follow the proof of Theorem 1 in \cite{zheng2024general}, so we defer the full details to the appendix.
\end{proof}

\begin{corollary}
\label{cor:stronglyconvex_const_step}
For $\mu$-strongly convex function $f$, under Assumptions \ref{asm:func}-\ref{asm:delay}, if for all $t$, $\step(t)=\step$ for $\step\in(0,\frac{1}{\mu})$ and $q(t)=q>0$, then we have
\begin{align*}
    &\E[\|\x(t)-\x^*\|^2]\leq(1-\mu\step)^t \E[\|\x(0)-\x^*\|^2]
    +n^2G(1+2\sqrt{C})\frac{\step}{\mu}+2n^4CGL^2\frac{\step^2}{\mu^2}+2n^2\frac{q^2}{\mu^2}.
\end{align*}
If for all $t$, $\step(t)=\step$ for $\step\in(0,\frac{1}{\mu})$, and $q(t)=\frac{q_0}{(t+1)^\beta}$ for $\beta>0$, we have
\begin{align*}
    \E[\|\x(t)-\x^*\|^2]
    =\Oc(t^{-2\beta})+n^2G(1+2\sqrt{C})\frac{\step}{\mu}+2n^4CGL^2\frac{\step^2}{\mu^2}.
\end{align*}
\end{corollary}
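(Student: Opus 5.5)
We will derive a one-step recursion for $v(t):=\E[\|\x(t)-\x^*\|^2]$, starting from the inequality in the proof sketch of Theorem~\ref{thm:stronglyconvex}:
\[
\|\x(t+1)-\x^*\|^2\le \|\x(t)-\x^*\|^2+\step^2\|\grad(t)\|^2-2\step\langle \x(t)-\x^*,\grad(t)\rangle .
\]
By Lemma~\ref{lemma:bound_of_g} the quadratic term contributes at most $n^2G\step^2$.

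For the cross term we decompose as in \eqref{eqn:4inner_products}:
\[
\grad(t)=\bigl(\grad(t)-\textstyle\sum_i\pfi(\tauit)\bigr)+\textstyle\sum_i\bigl(\pfi(\tauit)-\pfi(t)\bigr)+\textstyle\sum_i\bigl(\pfi(t)-\nabla f_i(\x(t))\bigr)+\nabla f(\x(t)).
\]
Each piece is handled as follows.
\begin{itemize}
\item For the last piece, strong convexity together with the optimality condition $\langle\nabla f(\x^*),\x(t)-\x^*\rangle\ge0$ on $\Ss$ gives $\langle \x(t)-\x^*,\nabla f(\x(t))\rangle\ge \mu\|\x(t)-\x^*\|^2$. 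This yields the contraction factor $-2\mu\step$.
\item The bias piece is handled with Young's inequality, $2\step\|\x-\x^*\|\,nq\le \tfrac{\mu\step}{2}\|\x-\x^*\|^2+\tfrac{2n^2q^2\step}{\mu}$. This gives the $q^2$ term, and the same splitting applied to the delay piece produces a term of size roughly $\tfrac{2\step}{\mu}\E\|\sum_i(\pfi(\tauit)-\pfi(t))\|^2\lesssim \tfrac{n^4GCL^2\step^3}{\mu}$ by Lemma~\ref{lemma:effect_of_delay} with $p(t)=\step$, because the step size is constant.
\item The noise piece $\grad(t)-\sum_i\pfi(\tauit)$ is not mean zero given $\x(t)$, since $\x(t)$ depends on $\bxi(\tauit)$ through the updates between $\tauit$ and $t$. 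We therefore write $\x(t)-\x^*=(\x(\tauit)-\x^*)+(\x(t)-\x(\tauit))$. The first part has zero conditional expectation against the noise by the definition of $\pfi$ (using independence of the delays). The second part is bounded by Cauchy--Schwarz using Lemma~\ref{lemma:effect_of_delay} and Lemma~\ref{lemma:bound_of_g}, which gives $2\step\sqrt{n^2GC\step^2}\sqrt{n^2G}=2n^2G\sqrt C\step^2$. This is the source of the $2\sqrt C$ factor.
\end{itemize}

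Collecting these terms, and allotting the $\mu\step$ budget so that the net contraction is $(1-\mu\step)$, we aim for
\[
v(t+1)\le(1-\mu\step)v(t)+n^2G(1+2\sqrt C)\step^2+2n^4CGL^2\tfrac{\step^3}{\mu}+2n^2\tfrac{q^2(t)\step}{\mu}.
\]
Unrolling this recursion and using $\sum_{k}(1-\mu\step)^k\le 1/(\mu\step)$ gives exactly the three constant terms in the first claim, because each forcing term is divided by $\mu\step$. The constants in the statement will dictate how the Young's-inequality weights are split, so that the coefficients come out as $2n^2/\mu^2$ and $2n^4CGL^2/\mu^2$. This constant-matching is the step I expect to need the most care.

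For the second claim, the $q$-forcing term becomes $\sum_{k<t}(1-\mu\step)^{t-1-k}\tfrac{2n^2q_0^2\step}{\mu(k+1)^{2\beta}}$. Split this sum at $k=t/2$. For $k<t/2$ the factor $(1-\mu\step)^{t/2}$ makes the contribution geometrically small, and for $k\ge t/2$ we have $(k+1)^{-2\beta}\le (t/2)^{-2\beta}$, so the geometric sum is $O(t^{-2\beta})$. The initial-condition term $(1-\mu\step)^t v(0)$ is also $o(t^{-2\beta})$. Together these give the stated $\Oc(t^{-2\beta})$ plus the unchanged constant terms.
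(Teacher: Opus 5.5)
Your proposal is correct and follows the paper's route. You use the same five-way split of $-\E[\langle \x(t)-\x^*,\grad(t)\rangle]$: the noise term is split at $\x(\tau_i(t))$, which must be done agent by agent inside the sum since $\tau_i(t)$ depends on $i$; Young's inequality is applied with weight $\mu\eta/2$ to both the delay and bias terms; and strong convexity handles the last term. This gives exactly recursion \eqref{eqn:vt_recursion} with $p(t)=\eta$, which you then unroll using $\sum_k(1-\mu\eta)^k\le 1/(\mu\eta)$. The only difference is that you prove the $\Oc(t^{-2\beta})$ bound on $\sum_{s<t}(1-\mu\eta)^{t-1-s}q^2(s)$ directly by splitting at $s=t/2$, whereas the paper cites Lemma~5 of \cite{reisizadeh2022distributed}.
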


\begin{remark}
    First, under the weaker delay assumption (Assumption~\ref{asm:delay}), Theorem \ref{thm:stronglyconvex} recovers the SGD-optimal rate $\Oc(\frac{1}{T})$ obtained in the bounded-delay result \cite{zheng2024general}. Second, Corollary~\ref{cor:stronglyconvex_const_step} reproduces the rate of Theorem~2 in \cite{zheng2024general} for constant step sizes and constant bias, and further shows that with constant step sizes and diminishing bias the mean squared error converges to a neighborhood whose radius is independent of the specific bias sequence $\{q(t)\}$; the convergence is sub-exponential yet super-polynomial.
\end{remark}

Our last result studies the main dynamics~\eqref{eqn:maindyn} for convex (but not necessarily strongly convex) functions $f$.
\begin{theorem}\label{thm:convex}
For a convex function $f$, in addition to Assumptions \ref{asm:func}-\ref{asm:delay}, assume that there exists $R\in(0,\infty)$ s.t. ${\E[\|\x(t)-\x^*\|^2]\leq R}$ for $\x^*\in \argmin_{\x\in\Ss}f(\x)$ and all $t$ (which is naturally present in all practical cases if $\Ss$ is bounded). Let $\{\step(t)\}$ be a decreasing sequence s.t. $\lim_{t\rightarrow\infty}\frac{\step(\lceil\kappa t\rceil)}{\step(t)}=\bar{\eta}>0$ where $\bar{\eta}>0$ is a constant and $\sum_{t=0}^{\infty}\step(t)=\infty$. Let $\Tilde{\x}(T)=\sum_{t=0}^T \frac{\step(t)}{\sum_{t=0}^T \step(t)}\x(t)$, then we have
\begin{align*}
    \E[f(\Tilde{\x}(T))]-f^*=\Oc\left(\frac{\sum_{t=0}^T \left(\step^2(t)+\step(t)q(t)\right)}{\sum_{t=0}^T \step(t)}\right).
\end{align*}
In particular, if $\step(t)=\frac{\step_0}{\sqrt{t+1}}$ for some constant $\step_0>0$, $q(t)=\frac{q_0}{(t+1)^\beta}$ for some constant $q_0>0$ and $\beta\geq\frac{1}{2}$, then $\E[f(\Tilde{\x}(T))]-f^*=\Oc\left(\frac{\log T}{\sqrt{T}}\right)=\Oc\left(\frac{1}{T^{\frac{1}{2}-\eps}}\right)$ for any $0<\eps<\frac{1}{2}$.
\end{theorem}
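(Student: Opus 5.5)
We follow the standard projected-SGD template for convex objectives, handling the delay and bias terms with Lemma~\ref{lemma:effect_of_delay} and Assumption~\ref{asm:grad_est}. Let $v(t):=\E[\|\x(t)-\x^*\|^2]$. The nonexpansiveness step at the start of the proof of Theorem~\ref{thm:stronglyconvex} together with Lemma~\ref{lemma:bound_of_g} gives
\begin{align*}
2\step(t)\E[\langle\x(t)-\x^*,\grad(t)\rangle]\leq v(t)-v(t+1)+n^2G\step^2(t).
\end{align*}
We then decompose $\grad(t)=\nabla f(\x(t))+\big(\sum_i\pfi(t)-\nabla f(\x(t))\big)+\sum_i\big(\pfi(\tauit)-\pfi(t)\big)+\big(\grad(t)-\sum_i\pfi(\tauit)\big)$. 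By convexity, $\langle\x(t)-\x^*,\nabla f(\x(t))\rangle\geq f(\x(t))-f^*$, so the first piece yields the desired progress.

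The three remaining pieces are each bounded in expectation using Cauchy--Schwarz and $\E\|\x(t)-\x^*\|\leq\sqrt{R}$. The bias piece contributes at most $n\sqrt{R}\,q(t)$. The delay piece contributes, by Lemma~\ref{lemma:effect_of_delay}, at most $\sqrt{R}\,n\cdot n\sqrt{GC}L\,p(t)$. Unlike in the nonconvex proof, we do \emph{not} use Young's inequality with a constant weight here, since that would leave an $\Oc(\step(t))$ residual that does not vanish after averaging.

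The noise piece $\grad(t)-\sum_i\pfi(\tauit)$ is the main obstacle. It is not conditionally zero-mean given $\x(t)$, because $\xi(\tauit)$ influenced $\x(\tauit+1),\dots,\x(t)$. We handle it by pairing with $\x(\tauit)$ instead of $\x(t)$. Given $\sigma(\x(\tauit))$ (and using independence of the delays), the inner product $\langle\x(\tauit)-\x^*,\grad_i(\x(\tauit))-\pfi(\tauit)\rangle$ has zero mean. The correction $\langle\x(t)-\x(\tauit),\cdot\rangle$ is then at most $\sqrt{n^2GCp^2(t)}\cdot 2\sqrt{G}$ per agent, again by Lemma~\ref{lemma:effect_of_delay} and Cauchy--Schwarz, i.e.\ $\Oc(p(t))$. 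If the conditioning argument turns out to be delicate, we fall back on conditioning on $\tauit$ first and invoking the tower property, as in Lemma~\ref{lemma:effect_of_delay}. Altogether,
\begin{align*}
2\step(t)\E[f(\x(t))-f^*]\leq v(t)-v(t+1)+c_1\step^2(t)+c_2\step(t)p(t)+c_3\step(t)q(t).
\end{align*}

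Since $\lim_{t\to\infty}\step(\lceil\kappa t\rceil)/\step(t)=\bar\eta$, we have $p(t)=\Oc(\step(t))$, so $\step(t)p(t)=\Oc(\step^2(t))$. Summing over $t=0,\dots,T$ telescopes $v$ to at most $v(0)\leq R$, which is a constant. Dividing by $2\sum_{t=0}^T\step(t)$ and applying Jensen's inequality to the convex $f$ at the weighted average $\Tilde{\x}(T)$ gives the stated bound $\Oc\big(\sum_t(\step^2(t)+\step(t)q(t))/\sum_t\step(t)\big)$; the constant $R$ is absorbed because $\sum_t\step(t)\to\infty$ and $\step(t)\geq\step(T)$, and it is dominated in the particular case below.

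For the particular choice $\step(t)=\step_0/\sqrt{t+1}$ and $q(t)=q_0(t+1)^{-\beta}$ with $\beta\geq\frac12$, we have $\sum_t\step^2(t)=\Oc(\log T)$, $\sum_t\step(t)q(t)\leq\Oc(\sum_t(t+1)^{-1})=\Oc(\log T)$, and $\sum_t\step(t)=\Theta(\sqrt{T})$. Together with the constant $R$, this gives $\Oc(\log T/\sqrt{T})$, which is $\Oc(T^{-1/2+\eps})$ for any $\eps>0$.
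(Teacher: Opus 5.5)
Your proposal is correct and follows essentially the same route as the paper. It uses the same five-way split of $-\E[\langle\x(t)-\x^*,\grad(t)\rangle]$, pairing the noise with $\x(\tauit)$ to get a zero-mean term plus an $\Oc(p(t))$ correction, and it uses Cauchy--Schwarz with $R$ rather than Young's inequality for the delay and bias terms, followed by convexity, telescoping, and Jensen at $\Tilde{\x}(T)$. The only differences are cosmetic: your constant $2\sqrt{G}$ versus the paper's $\E[\|\grad_i(\x(\tauit))-\pfi(\tauit)\|^2]\le G$, and your justification for absorbing $v(0)$, which is cleaner as $v(0)\le \frac{v(0)}{\step^2(0)}\sum_{t=0}^T\step^2(t)$.
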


\begin{remark}
    Note that ${\E[\|\x(t)-\x^*\|^2]\leq R}$ is a mild assumption; in particular, Assumption 2.3 in the delay-adaptive work~\cite{sra2016adadelay} is strictly stronger. Theorem~\ref{thm:convex} establishes that, for convex objectives, our rate $\Oc\left(\frac{\log T}{\sqrt{T}}\right)$ matches the optimal rate for classical SGD~\cite{garrigos2023handbook} and differs from the delay-adaptive method of \cite{sra2016adadelay} by at most a logarithmic factor. 
\end{remark}

\section{Conclusion}

In conclusion, we develop a unified framework for distributed stochastic optimization under delayed and approximate (potentially biased) gradient models, showing that a simple pre-chosen diminishing step size is sufficient to attain near-optimal performance under a mild scaled-delay condition. Our results recover the classical SGD rates for nonconvex and strongly convex objectives and match delay-adaptive guarantees for convex objectives up to logarithmic factors, thereby clarifying that delay adaptivity is not necessary to achieve optimal convergence behavior in this setting. Beyond revisiting the role of step size selection, the analysis provides a clean characterization of how bias, stochasticity, constraints, and scaled delays interact, offering practical guidance for designing robust distributed learning systems and suggesting several directions for future work, such as tighter convex bounds without logarithmic terms and extensions to more general networked or fully decentralized architectures.

\acks{This research is supported by NSF grant CNS-2148313, AutoCOMBOT
MURI grant, AFOSR FA9550-23-1-0057 grant, and AFOSR FA9550-24-1-0129 grant.}

\bibliography{bib}

\appendix


\section{Proof of Lemma \ref{lemma:bound_of_g}}

\begin{proof}
 Using the expression \eqref{eqn:grad_t} for $\grad(t)$ and Assumption~\ref{asm:grad_est} we have
    \begin{align*}
        \E[\|\grad(t)\|^2]
        =\E\left[\left\|\sum_{i=1}^n\grad_i(\x(\tauit))\right\|^2\right]
        \leq n\sum_{i=1}^n\E\left[ \|\grad_i(\x(\tauit))\|^2\right]
        \leq n^2G.
    \end{align*}
\end{proof}

\section{Proof of Theorem \ref{thm:stronglyconvex}}

\begin{proof}
    We first establish a recursive inequality for $v(t):=\E[\|\x(t)-\x^*\|^2]$.

    From (\ref{eqn:maindyn}), using the nonexpansive property of projection (see Corollary 2.2.3 in~\cite{nesterov2018lectures}), we have
    \begin{align*}
        \|\x(t+1)-\x^*\|^2=&\|\proj[\x(t)-\step(t)\grad(t)]-\proj[\x^*]\|^2
        \leq \|\x(t)-\x^*-\step(t)\grad(t)\|^2\\
        =&\|\x(t)-\x^*\|^2+\step^2(t)\|\grad(t)\|^2-2\step(t)\langle\x(t)-\x^*,\grad(t)\rangle.
    \end{align*}
    Taking the expected value of both sides, and using Lemma \ref{lemma:bound_of_g}, we have
    \begin{align*}
        v(t+1)\leq v(t)+n^2G\step^2(t)-2\step(t)\E[\langle\x(t)-\x^*,\grad(t)\rangle].
    \end{align*}
    Let us rewrite the last term on the right hand side of the above inequality as
    \begin{align}
    \label{eqn:expected_innerprod}
        &-\E[\langle\x(t)-\x^*,\grad(t)\rangle]
        =-\E\left[\langle\x(t)-\x^*,\sum_{i=1}^n\grad_i(\x(\tauit))\rangle\right]\cr
        =&-\sum_{i=1}^n\E\left[\langle\x(t)-\x(\tauit),\grad_i(\x(\tauit))-\pfi(\tauit)\rangle\right]
        -\sum_{i=1}^n\E\left[\langle\x(\tauit)-\x^*,\grad_i(\x(\tauit))-\pfi(\tauit)\rangle\right]\cr
        &-\sum_{i=1}^n\E\left[\langle\x(t)-\x^*,\pfi(\tauit)-\pfi(t)\rangle\right]
        -\sum_{i=1}^n\E\left[\langle\x(t)-\x^*,\pfi(t)-\nabla f_i(\x(t))\rangle\right]\cr
        &-\sum_{i=1}^n\E\left[\langle\x(t)-\x^*,\nabla f_i(\x(t))\rangle\right].
    \end{align}


    To bound the first term in (\ref{eqn:expected_innerprod}), using Cauchy-Schwarz inequality and the first inequality in  Lemma~\ref{lemma:effect_of_delay}, we have
    \begin{align*}
        &-\sum_{i=1}^n\E\left[\langle\x(t)-\x(\tauit),\grad_i(\x(\tauit))-\pfi(\tauit)\rangle\right]
        \leq \sum_{i=1}^n\E\left[\|\x(t)-\x(\tauit)\|\|\grad_i(\x(\tauit))-\pfi(\tauit)\|\right]\\
        &\leq \sum_{i=1}^n\sqrt{\E\left[\|\x(t)-\x(\tauit)\|^2\right]}
        \cdot \sqrt{\E\left[\|\grad_i(\x(\tauit))-\pfi(\tauit)\|^2\right]}\\
        &\leq \sum_{i=1}^n\sqrt{n^2GCp^2(t)\cdot G}
        =n^2\sqrt{C}Gp(t).
    \end{align*}
    For the second term in (\ref{eqn:expected_innerprod}), using the law of total expectation, we have
    \begin{align*}
        &-\sum_{i=1}^n\E\left[\langle\x(\tauit)-\x^*,\grad_i(\x(\tauit))-\pfi(\tauit)\rangle\right]\\
        =&-\sum_{i=1}^n\E\left[\E\left[\langle\x(\tauit)-\x^*, \grad_i(\x(\tauit))-\pfi(\tauit)\rangle | \sigma(\x(\tauit))\right]\right]\\
        =&-\sum_{i=1}^n\E\left[\langle\x(\tauit)-\x^*, \E\left[\grad_i(\x(\tauit))-\pfi(\tauit) | \sigma(\x(\tauit))\right]\rangle\right]=0.
    \end{align*}
    To bound the third term in (\ref{eqn:expected_innerprod}), using $ab\leq\frac{1}{2}(a^2+b^2)$ and Lemma \ref{lemma:effect_of_delay}, we have
    \begin{align*}
        &-\sum_{i=1}^n\E\left[\langle\x(t)-\x^*,\pfi(\tauit)-\pfi(t)\rangle\right]\\
        \leq&\frac{1}{2}\sum_{i=1}^n\bigg(\frac{\mu}{2n}\E[\|\x(t)-\x^*\|^2]
        +\frac{2n}{\mu}\E[\|\pfi(\tauit)-\pfi(t)\|^2]\bigg)\leq\frac{1}{2}\left(\frac{\mu}{2}v(t)+2n^4CGL^2\frac{p^2(t)}{\mu}\right).
    \end{align*}
    To bound the fourth term in (\ref{eqn:expected_innerprod}), using $ab\leq\frac{1}{2}(a^2+b^2)$ and Assumption \ref{asm:grad_est}, we have
    \begin{align*}
        &-\sum_{i=1}^n\E\left[\langle\x(t)-\x^*,\pfi(t)-\nabla f_i(\x(t))\rangle\right]\\
        \leq&\frac{1}{2}\sum_{i=1}^n\left(\frac{\mu}{2n}\E[\|\x(t)-\x^*\|^2]
        +\frac{2n}{\mu}\E[\|\pfi(t)-\nabla f_i(\x(t))\|^2]\right)
        \leq \frac{1}{2}\left(\frac{\mu}{2}v(t)+2n^2\frac{q^2(t)}{\mu}\right).
    \end{align*}
    To bound the last term in (\ref{eqn:expected_innerprod}), using the strong convexity of $f$, we have
    \begin{align*}
        &-\sum_{i=1}^n\E\left[\langle\x(t)-\x^*,\nabla f_i(\x(t))\rangle\right]
        =-\E[\langle\x(t)-\x^*,\nabla f(\x(t))\rangle]
        \leq-\mu\E[\|\x(t)-\x^*\|^2]
        =-\mu v(t).
    \end{align*}
    
   Using the above inequalities to bound the terms in~\eqref{eqn:expected_innerprod}, we have the following recursive inequality for $v(t)$ 
    \begin{align}
    \label{eqn:vt_recursion}
        v(t+1)\leq&(1-\mu\step(t))v(t)+n^2G\step^2(t)+2n^2\sqrt{C}G\step(t)p(t)
        +2n^4CGL^2\frac{\step(t)p^2(t)}{\mu}+2n^2\frac{\step(t)q^2(t)}{\mu}.
    \end{align}
    Since $p(t)=\Oc(\step(t))$, if $\sum_{t=0}^\infty\step(t)=\infty$, ${\sum_{t=0}^\infty\step^2(t)<\infty}$, and ${\sum_{t=0}^\infty\step(t)q^2(t)<\infty}$, using the deterministic version of Robbins-Siegmund Theorem \cite{robbins1971convergence}, we conclude that 
    \[{\lim_{t\rightarrow\infty}v(t)=\lim_{t\rightarrow\infty}\E[\|\x(t)-\x^*\|^2]=0}.\]

    If we choose $\step(t)=\frac{\step_0}{t+1}$ for some $\step_0>0$, and $q(t)=\frac{q_0}{(t+1)^\beta}$ for some constant $q_0>0$ and $\beta\geq\frac{1}{2}$, it follows by Lemma 5 in \cite{reisizadeh2022distributed} or a similar argument in the derivations in Section 2.1 in \cite{nemirovski2009robust} that ${v(t)=\E[\|\x(t)-\x^*\|^2]=\Oc(\frac{1}{t})}$.
\end{proof}

\section{Proof of Corollary \ref{cor:stronglyconvex_const_step}}

\begin{proof}
    If we let $\step(t)=\step$ and $q(t)=q$ for all $t$, \eqref{eqn:vt_recursion} becomes
    \begin{align*}
        v(t+1)\leq& (1-\mu\step)v(t)+n^2G(1+2\sqrt{C})\step^2
        +2n^4CGL^2\frac{\step^3}{\mu} +2n^2\frac{\step q^2}{\mu}.
    \end{align*}
    Let $r(\step,q):=n^2G(1+2\sqrt{C})\step^2+2n^4CGL^2\frac{\step^3}{\mu}+2n^2\frac{\step q^2}{\mu}$. Then, when $\mu\step<1$, we have
    \begin{align*}
        v(t)&\leq (1-\mu\step)^t v(0)+r(\step,q)\sum_{s=0}^{t-1}(1-\mu\step)^s
        \leq (1-\mu\step)^t v(0)+r(\step,q)\frac{1}{\mu\step}.
    \end{align*}
    As a result, when $0<\step<\frac{1}{\mu}$, $\x(t)$ converges to the $r$-neighborhood of $\x^*$ exponentially fast where ${r=r(\step,q)\frac{1}{\mu\step}=n^2G(1+2\sqrt{C})\frac{\step}{\mu}+2n^4CGL^2\frac{\step^2}{\mu^2}+2n^2\frac{q^2}{\mu^2}}$.

    If we let $\step(t)=\step$ for all $t$ and $\{q(t)\}$ be a diminishing sequence in the form of $q(t)=\frac{q_0}{(t+1)^\beta}$, \eqref{eqn:vt_recursion} becomes
    \begin{align*}
        v(t+1)\leq& (1-\mu\step)v(t)+n^2G(1+2\sqrt{C})\step^2
        +2n^4CGL^2\frac{\step^3}{\mu} +2n^2\frac{\step}{\mu}q^2(t).
    \end{align*}
    Let $r'(\step):=n^2G(1+2\sqrt{C})\step^2+2n^4CGL^2\frac{\step^3}{\mu}$. Then, when $\mu\step<1$, we have
    \begin{align*}
        v(t)\leq& (1-\mu\step)^t v(0)+r'(\step)\sum_{s=0}^{t-1}(1-\mu\step)^s
        +2n^2\frac{\step}{\mu}\sum_{s=0}^{t-1} (1-\mu\step)^{t-1-s}q^2(s)\\
        \leq& (1-\mu\step)^t v(0)+r'(\step)\frac{1}{\mu\step}+2n^2\frac{\step}{\mu}\sum_{s=0}^{t-1} (1-\mu\step)^{t-1-s}q^2(s).
    \end{align*}
    By Lemma 5 in \cite{reisizadeh2022distributed}, we have $\sum_{s=0}^{t-1} (1-\mu\step)^{t-1-s}q^2(s)=\Oc(t^{-2\beta})$. Therefore, when $0<\step<\frac{1}{\mu}$, $\x(t)$ converges to the $r'$-neighborhood of $\x^*$ where $r'=r'(\step)\frac{1}{\mu\step}=n^2G(1+2\sqrt{C})\frac{\step}{\mu}+2n^4CGL^2\frac{\step^2}{\mu^2}$.

\end{proof}

\section{Proof of Theorem \ref{thm:convex}}

\begin{proof}
    Choose an $x^*\in\argmin_{\x\in\Ss}f(\x)$. We still employ the dynamic for $v(t)=\E[\|\x(t)-\x^*\|^2]$ and bound each term in \eqref{eqn:expected_innerprod} respectively, but treat them in a slightly different manner.
    
    We bound the first two terms in \eqref{eqn:expected_innerprod} in the same way of proving Theorem \ref{thm:stronglyconvex}.

    Using Cauchy-Schwarz inequality, Lemma \ref{lemma:effect_of_delay}, and the assumption of $\E[\|\x(t)-\x^*\|^2]\leq R$ for all $t$, we can bound the third term in \eqref{eqn:expected_innerprod} as
    \begin{align*}
        &-\sum_{i=1}^n\E\left[\langle\x(t)-\x^*,\pfi(\tauit)-\pfi(t)\rangle\right]\\
        \leq&\sum_{i=1}^n\E[\|\x(t)-\x^*\|\cdot\|\pfi(\tauit)-\pfi(t)\|]\\
        \leq&\sum_{i=1}^n\sqrt{\E[\|\x(t)-\x^*\|^2]}\cdot\sqrt{\E[\|\pfi(\tauit)-\pfi(t)\|^2]}
        \leq n^2\sqrt{RGC}Lp(t).
    \end{align*}

    Using Cauchy-Schwarz inequality, Assumption \ref{asm:grad_est}, and the assumption of $\E[\|\x(t)-\x^*\|^2]\leq R$ for all $t$, we can bound the fourth term in \eqref{eqn:expected_innerprod} as
    \begin{align*}
        &-\sum_{i=1}^n\E\left[\langle\x(t)-\x^*,\pfi(t)-\nabla f_i(\x(t))\rangle\right]\\
        \leq&\sum_{i=1}^n\E[\|\x(t)-\x^*\|\cdot\|\pfi(t)-\nabla f_i(\x(t))\|]\\
        \leq&\sum_{i=1}^n\sqrt{\E[\|\x(t)-\x^*\|^2]}\cdot\sqrt{\E[\|\pfi(t)-\nabla f_i(\x(t))\|^2]}
        \leq n\sqrt{R}q(t).
    \end{align*}

    Using the convexity of $f$, we can bound the last term in \eqref{eqn:expected_innerprod} as 
    \begin{align*}
        &-\sum_{i=1}^n\E\left[\langle\x(t)-\x^*,\nabla f_i(\x(t))\rangle\right]
        =-\E[\langle\x(t)-\x^*,\nabla f(\x(t))\rangle]
        \leq f(\x^*)-\E[f(\x(t))].
    \end{align*}

    Using the above inequalities to bound the terms in~\eqref{eqn:expected_innerprod}, we have the following bound for $\step(t)\left(\E[f(\x(t))]-f(\x^*)\right)$:
    \begin{align*}
        2\step(t)\left(\E[f(\x(t))]-f(\x^*)\right)
        \leq &v(t)-v(t+1)+n^2G\step^2(t)+2n^2\sqrt{CG}(\sqrt{G}+L\sqrt{R})\step(t)p(t)\\
        &+2n\sqrt{R}\step(t)q(t).
    \end{align*}
    By taking the telescoping sum, we get
    \begin{align*}
        2\sum_{t=0}^T\step(t)\left(\E[f(\x(t))]-f(\x^*)\right)
        \leq &v(0)+n^2G\sum_{t=0}^T\step^2(t)+2n^2\sqrt{CG}(\sqrt{G}+L\sqrt{R})\sum_{t=0}^T\step(t)p(t)\\
        &+2n\sqrt{R}\sum_{t=0}^T\step(t)q(t).
    \end{align*}
    Dividing both sides by $2\sum_{t=0}^T \step(t)$, we have
    \begin{align*}
        \E\left[\frac{\sum_{t=0}^T\step(t)f(\x(t))}{\sum_{t=0}^T \step(t)}\right]-f(\x^*)
        \leq &\frac{1}{2\sum_{t=0}^T \step(t)}\left(v(0)+n^2G\sum_{t=0}^T\step^2(t)\right.\\
        &\left.+2n^2\sqrt{CG}(\sqrt{G}+L\sqrt{R})\sum_{t=0}^T\step(t)p(t)+2n\sqrt{R}\sum_{t=0}^T\step(t)q(t)\right).
    \end{align*}
    Let $\Tilde{\x}(T)=\sum_{t=0}^T \frac{\step(t)}{\sum_{t=0}^T \step(t)}\x(t)$. Since $f$ is convex, we have
    \begin{align*}
        \E[f(\Tilde{\x}(T))]-f^*\leq \E\left[\frac{\sum_{t=0}^T\step(t)f(\x(t))}{\sum_{t=0}^T \step(t)}\right]-f^*=\Oc\left(\frac{\sum_{t=0}^T \left(\step^2(t)+\step(t)q(t)\right)}{\sum_{t=0}^T \step(t)}\right).
    \end{align*}

\end{proof}

\end{document}